\theoremstyle{plain}
\newtheorem{theorem}{Theorem}[section]
\newtheorem{lemma}[theorem]{Lemma}
\newtheorem{proposition}[theorem]{Proposition}
\newtheorem{corollary}[theorem]{Corollary}
\theoremstyle{definition}
\newtheorem{remark}[theorem]{Remark}
\newtheorem{acknowledgements}{Acknowledgements\!\!}
\newcommand{\norm}[1]{{||#1||}}
\newcommand{\bignorm}[1]{{\left|\left|#1\right|\right|}}
\newcommand{\dderiv}[4]{{\partial_{#1}^{#3}\partial_{#2}^{#4}}}
\newcommand{\wtilde}[1]{{\widetilde{#1}}}
\def\supp{\mathop{\mathrm{supp}}\nolimits}
\def\Hess{\mathop{\mathrm{Hess}}\nolimits}
\def\SU{\mathop{\mathrm{SU}}\nolimits}
\def\Vol{\mathop{\mathrm{Vol}}\nolimits}
\def\R{{\mathbb{R}}}
\def\Z{{\mathbb{Z}}}
\def\N{{\mathbb{N}}}
\def\C{{\mathbb{C}}}
\def\<{{\langle}}
\def\>{{\rangle}}
\def\ep{{\varepsilon}}
\title
[Strichartz estimates for non elliptic Schr\"odinger equations]
{Strichartz estimates for non elliptic Schr\"odinger equations on compact manifolds}
\author
{Haruya Mizutani}
\address{Department of Mathematics, Graduate School of Science, Osaka University, Toyonaka, Osaka 560-0043, Japan.}
\email{haruya@math.sci.osaka-u.ac.jp}
\author
{Nikolay Tzvetkov}
\address{Universit\'e Cergy-Pontoise, UMR CNRS 8088, Cergy-Pontoise 95000, France.}
\email{nikolay.tzvetkov@u-cergy.fr}
\begin{document}

\subjclass[2000]{Primary 35B45;\ Secondary 35Q41}

\keywords{Strichartz estimates; degenerate metric; compact manifold}

\begin{abstract}
In this note we consider the Schr\"odinger equation on compact manifolds equipped with possibly degenerate metrics. 
We prove Strichartz estimates with a loss of derivatives. The rate of loss of derivatives depends on the degeneracy of metrics. 
For the non-degenerate case we obtain, as an application of the main result, the same Strichartz estimates as that in the elliptic case. 
This extends Strichartz estimates for Riemannian metrics proved by Burq-G\'erard-Tzvetkov \cite{BGT} to the non-elliptic case and improves the result by Salort \cite{Salort}. 
We also investigate the optimality of the result for the case on $\mathbb{S}^3\times \mathbb{S}^3$. 
\end{abstract}

\maketitle

\footnotetext{H.M. is partially supported by JSPS Wakate (B) 25800083. N.T. is partially supported by the ERC grant Dispeq.}


\section{Introduction}
\label{Introduction}
In this note we consider the Schr\"odinger equation
$$
(i\partial_t+\rho^{-1}\partial_j a^{jk}\rho\partial_k)u=0;\quad u|_{t=0}=u_0,
$$
posed on a smooth compact manifold $M$ without boundaries (all compact manifolds considered in the note are without boundaries), where $\rho$ is a smooth positive function and $a^{jk}$ is a possibly degenerate uniformly bounded (co)metric on $M$, see Section \ref{section_2} for the precise definition. We then prove Strichartz estimates with a loss of derivatives, and the rate of loss of derivatives depends on the degeneracy of $a^{jk}$. 
As an application we extend Strichartz estimates for Riemannian metrics obtained by \cite{BGT} to pseudo Riemannian metrics. 

After the pioneer work of Strichartz \cite{Strichartz}, Strichartz estimates for Schr\"odinger equations have been extensively studied by many authors under various conditions on underlying manifolds. In particular, it was shown by Bourgain \cite{Bourgain} that, on the $2$-dimensional torus $\mathbb T^2=(\R/2\pi\Z)^2$, the following $L^4$-Strichartz estimate with an arbitrarily small loss holds:
\begin{align}
\label{Bourgain's_estimate}
\norm{e^{it(\partial_x^2+\partial_y^2)}u_0}_{L^4([0,1]\times \mathbb T^2)} \le C_\ep \norm{u_0}_{H^\ep(\mathbb T^2)},\quad \ep>0.
\end{align}
On arbitrary compact $n$-dimensional  Riemannian manifolds $(M,g)$ without boundary, Burq, G\'erard and the second author of this note proved in \cite{BGT} that
\begin{align}
\label{Strichartz_estimate}
\norm{e^{it\Delta_g}u_0}_{L^p([0,1];L^q(M))}\le C\norm{u_0}_{H^{\frac1p}(M)},
\end{align}
where $(p,q)$ satisfies $p\ge2$, $\frac2p+\frac nq=\frac n2$ and $(p,q)\neq(2,\infty)$. The authors also showed that, on the $n$-dimensional sphere $\mathbb S^n$, 
$$
\norm{e^{it\Delta_{\mathbb S^n}}u_0}_{L^4([0,1]\times \mathbb S^n)}\le C\norm{u_0}_{H^{s}(M)},\quad s>s(n),
$$
with $s(2)=\frac18$, $s(n)=\frac n4-\frac12$ for $n\ge3$, and that this is sharp in the sense that similar estimates fail with $s\le s(n)$ if $n\ge3$ and with $s<s(2)$ if $n=2$. 
Strichartz estimates on non-compact manifolds have also been investigated under some conditions on the geodesic flow and asymptotic conditions on the metric; see, \emph{e.g.}, \cite{MMT,BGH} and references therein. 

However, the non-elliptic case is less understood than the elliptic case. In \cite{Salort}, Salort proved Strichartz estimates with a loss of derivatives $\frac1p+\ep$ for possibly degenerate metrics on $\R^n$, where the admissible condition for a pair $(p,q)$ is different from the usual and depends on the degeneracy of the metric, see Remark \ref{remark_1}. Recently Godet and the second author \cite{Godet_Tzvetkov} (see, also Wang \cite{Wang}) showed that the solution to the following non-elliptic Schr\"odinger equation
$$
(i\partial_t+\partial_x^2-\partial_y^2)u=0;\quad u|_{t=0}=u_0,
$$
posed on $\mathbb T^2$, satisfies 
\begin{align}
\label{Godet_Tzvetkov_1}
\norm{e^{it(\partial_x^2-\partial_y^2)}u_0}_{L^p([0,1];L^q(\mathbb T^2))} \le C\norm{u_0}_{H^\frac1p(\mathbb T^2)},\quad \frac1p+\frac1q=\frac12,\ p>2.
\end{align}
Moreover, it was also proved that \eqref{Godet_Tzvetkov_1} fails if $\norm{u_0}_{H^\frac1p(\mathbb T^2)}$ is replaced by $\norm{u_0}_{H^s(\mathbb T^2)}$ with $s<\frac1p$ in contrast to \eqref{Bourgain's_estimate}. 

The purpose of the present note is extending this result to more general compact manifolds. 
To explain our result in a simple setting we first consider a non-degenerate case. 
Let $M=M_1\times M_2$ be a product of two compact Riemannian manifolds $(M_1,g_1)$ and $(M_2,g_2)$, and $g=g_1+g_2$ a canonical Riemannian metric on $M$. 
The Laplace-Beltrami operator $\Delta_g=\Delta_{g_1}+\Delta_{g_2}$, associated to $g$, is essentially self-adjoint on $C^\infty(M)$ with respect to the measure $d\mu=\sqrt{|\det g|} dx$. 
Then we consider a non-degenerate second-order differential operator $-\Delta_{g_1}+\Delta_{g_2}$ on $C^\infty(M)$, which is symmetric with respect to $d\mu$. 
A direct computation yields that $(-\Delta_{g_1}+\Delta_{g_2})^2\le (-\Delta_{g}+1)^2$ on $C^\infty(M)$ and $-\Delta_{g_1}+\Delta_{g_2}$ commutes with $-\Delta_{g}+1$. 
Hence $-\Delta_{g_1}+\Delta_{g_2}$ is essentially self-adjoint on $C^\infty(M)$ by Nelson's commutator theorem (see \cite[Theorem X. 36]{RS}) and admits a unique self-adjoint realization, which we denote by the same symbol. By Stone's theorem, $-\Delta_{g_1}+\Delta_{g_2}$ generates a unique unitary propagator $e^{it(\Delta_{g_1}-\Delta_{g_2})}$ on $L^2(M):=L^2(M,d\mu)$ such that $u(t)=e^{it(\Delta_{g_1}-\Delta_{g_2})}u_0$ is the solution to the following Schr\"odinger equation:
$$
(i\partial_t+\Delta_{g_1}-\Delta_{g_2})u=0;\quad u|_{t=0}=u_0\in L^2(M).
$$
The following statement is a simple consequence of the main result (see Theorem \ref{main_theorem_1}).
\begin{theorem}
\label{theorem_1}
Let $M$ be as above, $n=\dim M\ge2$ and $(p,q)$ satisfy the admissible condition:
\begin{align}
\label{admissible_1}
p\ge2,\quad\frac 2p+\frac nq=\frac n2,\quad (n,p,q)\neq(2,2,\infty).
\end{align}
Then there exists $C>0$ such that for any $u_0\in H^\frac1p(M)$,
\begin{align}
\label{theorem_1_1}
\norm{e^{it(\Delta_{g_1}-\Delta_{g_2})}u_0}_{L^p([0,1];L^q(M))}\le C\norm{u_0}_{H^{\frac1p}(M)}.
\end{align}
\end{theorem}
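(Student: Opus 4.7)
The plan is to follow the Burq--Gérard--Tzvetkov scheme of \cite{BGT}, adapted to the non-elliptic setting. Set $P := -\Delta_{g_1}+\Delta_{g_2}$. The central observation is that $P$ commutes with the elliptic Laplacian $-\Delta_g$, so one can perform a Littlewood--Paley decomposition $\mathrm{Id} = \psi_0(-\Delta_g) + \sum_{k\ge 1}\psi(-2^{-2k}\Delta_g)$ in terms of spectral projectors of $\Delta_g$ that commute with $e^{itP}$. Combined with the Sogge-type square function estimate for $\Delta_g$, as already exploited in \cite{BGT}, this reduces \eqref{theorem_1_1} to frequency-localised bounds of the form
\begin{equation*}
\bignorm{e^{itP}\psi(-h^2\Delta_g)u_0}_{L^p([0,1];L^q(M))} \le C\, h^{-1/p}\norm{u_0}_{L^2(M)},
\end{equation*}
uniformly in the dyadic parameter $h = 2^{-k}$.

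Chopping $[0,1]$ into $O(h^{-1})$ intervals of length $h$ and using the $L^2$-unitarity of $e^{itP}$, the above reduces in turn to a semiclassical Strichartz estimate on the short interval $[0,h]$:
\begin{equation*}
\bignorm{e^{itP}\psi(-h^2\Delta_g)u_0}_{L^p([0,h];L^q(M))} \le C\norm{u_0}_{L^2(M)}.
\end{equation*}
By the Keel--Tao machinery, this semiclassical Strichartz estimate follows from the $L^2$-preservation of $e^{itP}$ together with a semiclassical dispersive estimate
\begin{equation*}
\bignorm{e^{itP}\psi(-h^2\Delta_g)u_0}_{L^\infty(M)} \le C\, h^{-n}(h/|t|)^{n/2}\norm{u_0}_{L^1(M)}, \qquad 0<|t|\le h.
\end{equation*}

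The bulk of the work is the dispersive estimate. Using a finite atlas and a microlocal partition of unity, one constructs on each chart a WKB/parametrix representation of $e^{itP}\psi(-h^2\Delta_g)$ as a semiclassical Fourier integral operator with phase function generated, for small $|t|/h$, by the Hamilton flow of the principal symbol $p(x,\xi) = |\xi'|^2_{g_1(x_1)} - |\xi''|^2_{g_2(x_2)}$, where $x=(x_1,x_2)$ and $\xi=(\xi',\xi'')$ reflect the product structure. Decay comes from stationary phase in $\xi$; the governing quantity is $|\det\partial_\xi^2\varphi|$, where $\varphi$ is the generating phase, and for short times $\partial_\xi^2\varphi \approx t\,\Hess_\xi p$. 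The crucial point is that
\begin{equation*}
\Hess_\xi p(x,\xi) \;=\; 2\,\mathrm{diag}\bigl(g_1^{-1}(x_1),\,-g_2^{-1}(x_2)\bigr)
\end{equation*}
is non-degenerate with signature $(n_1,-n_2)$. Since stationary phase needs only non-degeneracy, this yields exactly the $(h/|t|)^{n/2}$ decay; the signature produces only a harmless unimodular Maslov phase factor that does not affect the $L^1\to L^\infty$ bound.

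The main obstacle is making the WKB construction robust in the absence of ellipticity: the characteristic set $\{p=0\}$ is non-compact inside any fixed energy shell, so one cannot simultaneously localise in $|\xi|$ and in $p(x,\xi)$ by a single cutoff of $P$. The remedy is precisely to localise with $\Delta_g$, which enforces $|\xi|\sim h^{-1}$ while allowing $p(x,\xi)$ to range throughout $[-Ch^{-2},Ch^{-2}]$; on the semiclassical time scale $|t|\le h$ this is enough, because the Hamilton flow of $p$ then displaces $x$ by $O(1)$, keeping all relevant bicharacteristics in a fixed coordinate patch, and the amplitude is controlled by the standard transport equations. Once the chart-wise dispersive estimate is in hand, summation over the partition of unity, combining with Keel--Tao, Littlewood--Paley and dyadic summation in $k$, yields \eqref{theorem_1_1}.
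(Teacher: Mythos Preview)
Your proposal is correct and follows essentially the same route as the paper: Theorem~\ref{theorem_1} is derived as the case $m=n$ of the main Theorem~\ref{main_theorem_1}, and since $P=-\Delta_{g_1}+\Delta_{g_2}$ commutes with $\Delta_g$, the proof there reduces precisely to the scheme you outline---Littlewood--Paley localisation by $\varphi(-h^2\Delta_g)$ commuting with $e^{-itP}$, a WKB parametrix plus stationary phase exploiting the non-degeneracy (not positivity) of $\Hess_\xi p$ to obtain the $|t|^{-n/2}h^{-n/2}$ dispersive bound (Proposition~\ref{proposition_dispersive}), Keel--Tao on intervals of length $h$ (Corollary~\ref{corollary_proof_main_theorem_2}), and summation over $O(h^{-1})$ such intervals. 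The only cosmetic difference is that the paper invokes Salort's parametrix construction for the WKB step rather than redoing it, and packages the general (possibly non-commuting, degenerate) case first before specialising.
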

This note is organized as follows. In the next section we state the main result. Section \ref{section_3} is devoted to the proof of main theorems. In Section \ref{section_optimality}, we show that Theorem \ref{theorem_1} is sharp for the case on $\mathbb{S}^3\times\mathbb{S}^3$ in the sense that \eqref{theorem_1_1} fails if we replace $\norm{u_0}_{H^{\frac1p}(M)}$ by $\norm{u_0}_{H^{s}(M)}$ with $s<\frac1p$. 

\begin{acknowledgements}
The main part of this work has been done while the first author was visiting D\'epartement de Math\'ematiques, Universit\'e de Cergy-Pontoise. He acknowledges the hospitality of the department. The authors thank the anonymous referees for useful comments and remarks. 
\end{acknowledgements}


\section{Main result}
\label{section_2}
Let $M$ be a $n$-dimensional compact manifold with a smooth positive density $d \mu$ and set $L^p(M)=L^p(M,d\mu)$. Consider the Schr\"odinger equation
\begin{align}
\label{equation_1}
(i\partial_t-P)u=0;\quad u|_{t=0}=u_0,
\end{align}
where $P$ is a second-order differential operator  on $M$ which, in local coordinates, is of the form
\begin{align}
\label{P}
P=-\rho^{-1}\partial_{j}a^{jk}\rho\partial_k,\quad j,k=1,...,n,
\end{align}
with the summation convention, where $\rho$ is a smooth real-valued function on $M$ and $a^{jk}$ is a smooth real-valued $(2,0)$-tensor on $M$. Then we suppose the following:
\begin{itemize}
\item[(H1)] 
$\rho(x)>0$ and  
$
|\partial_x^\alpha a^{jk}(x)|+|\partial_x^\alpha \rho(x)|\le C_\alpha
$ for any $\alpha\in \Z^n_+:=(\N\cup\{0\})^n$.
\item[(H2)] In a neighborhood of each point there exists a rank $m$ submatrix $(b^{jk}(x))_{j,k=1}^m$ of $(a^{jk}(x))_{j,k=1}^n$ with $1\le m\le n$ such that
$$|\det (b^{jk}(x))_{j,k=1}^m|\ge c$$
with some positive constant $c>0$. 
\item[(H3)] $P$ is essentially self-adjoint on $C^\infty(M)$ with respect to $d \mu$. 
\end{itemize}
The hypothesis (H3) might be replaced by other assumptions assuring the existence of the dynamics.
We denote the self-adjoint realization of $P$ by the same symbol $P$. Note that these assumptions are independent of the choice of local coordinates. Indeed, if $P$ becomes $-\wtilde\rho^{-1}\partial_{j}\wtilde a^{jk}\wtilde \rho\partial_k$ after making the change of coordinates, then $\wtilde\rho$ and $\wtilde a^{jk}$ again fulfill (H1), (H2) and (H3) with the same ${m}$. For $\sigma>0$ we say that $(p,q)$ is $\sigma$-admissible if 
\begin{align}
\label{sigma_admissible}
p\ge2,\quad \frac 1p+\frac{\sigma}{q}=\frac\sigma2,\quad (\sigma,p,q)\neq(1,2,\infty). 
\end{align}

Now we are in a position to state the main result.

\begin{theorem}						
\label{main_theorem_1}
Let $3\le m\le n$. Suppose that \emph{(H1)}, \emph{(H2)} and \emph{(H3)} are satisfied and that $(p,q)$ is $\frac m2$-admissible. 
Then there exists $C>0$ such that, for any $u_0\in H^{\frac1p\left(\frac{2n}{m}-1\right)}(M)$, the solution $u(t)=e^{-itP}u_0$ to \eqref{equation_1} satisfies
\begin{align}
\label{main_theorem_1_1}
\norm{e^{-itP}u_0}_{L^p([0,1];L^q(M))}\le C\norm{u_0}_{H^{\frac1p\left(\frac{2n}{m}-1\right)}(M)}.
\end{align}
Furthermore, if there exists a smooth Riemannian metric $g_0$ on $M$ such that $P$ commutes with  $\Delta_{g_0}$, then \eqref{main_theorem_1_1} also holds for $m=n=2$ and for $n\ge3$, $m=2$. 
\end{theorem}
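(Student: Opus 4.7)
I would follow the semiclassical Littlewood--Paley strategy of Burq--G\'erard--Tzvetkov \cite{BGT}. Fix a dyadic Littlewood--Paley cut-off $\varphi\in C^\infty_c((0,\infty))$ and decompose $u_0=\sum_h\varphi(h^2P)u_0$, the sum running over dyadic $h\in(0,1]$ (low frequencies being absorbed into a trivial $L^2$-bounded piece). The main objective is the frequency-localized bound
\begin{equation*}
\|e^{-itP}\varphi(h^2P)u_0\|_{L^p([0,1];L^q(M))}\le C\,h^{-(2n-m)/(mp)}\|u_0\|_{L^2(M)},
\end{equation*}
uniform in $h$; combined with the almost-orthogonality $\|u\|_{H^s}^2\sim\sum_h h^{-2s}\|\varphi(h^2P)u\|_{L^2}^2$ and the Littlewood--Paley square-function inequality in $L^p_tL^q_x$ (valid since $p,q\ge2$), this yields \eqref{main_theorem_1_1} with the declared loss $s_0=\tfrac1p\bigl(\tfrac{2n}{m}-1\bigr)$.

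The core ingredient is the semiclassical dispersive estimate
\begin{equation*}
\|e^{-itP}\varphi(h^2P)\|_{L^1(M)\to L^\infty(M)}\le C\,h^{-n+m/2}|t|^{-m/2},\qquad 0<|t|\le T_0,
\end{equation*}
with $T_0>0$ independent of $h$. To prove it I would pass to coordinate charts, conjugate $P$ by $\sqrt\rho$ so that the reference measure becomes Lebesgue, and construct a semiclassical WKB parametrix
\begin{equation*}
(2\pi h)^{-n}\iint e^{i(S(t,x,\xi)-y\cdot\xi)/h}\,b(t,x,\xi,h)\,u_0(y)\,d\xi\,dy,
\end{equation*}
in which the phase $S$ solves the Hamilton--Jacobi equation $\partial_tS+a^{jk}\partial_jS\,\partial_kS=0$ with data $S|_{t=0}=x\cdot\xi$, the amplitude $b$ is compactly supported in $\xi$ (inherited from $\varphi$) and is built by iteratively solving transport equations, and the remainder is smoothing on $[0,T_0]$ modulo $O(h^\infty)$. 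The Hessian in $\xi$ of the phase equals $-2t\,(a^{jk}(x))+O(t^2)$, whose rank is at least $m$ on $\supp b$ by (H2); a Morse-lemma-with-parameters reduction then permits partial stationary phase in these $m$ non-degenerate directions, producing the factor $(h/|t|)^{m/2}$, while the remaining $n-m$ degenerate directions are controlled solely through the compact $\xi$-support of $b$. Cross-chart contributions are handled by non-stationary phase against a microlocal partition of unity.

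Granted the dispersive bound, the Keel--Tao $TT^*$ argument on the fixed interval $[0,T_0]$ yields
\begin{equation*}
\|e^{-itP}\varphi(h^2P)u_0\|_{L^p([0,T_0];L^q)}\le C\,h^{(-n+m/2)(1/2-1/q)}\|u_0\|_{L^2}
\end{equation*}
for every $\tfrac m2$-admissible pair $(p,q)$, and the admissibility identity $\tfrac12-\tfrac1q=\tfrac{2}{mp}$ converts the exponent into $-(2n-m)/(mp)$. The hypothesis $m\ge3$, i.e.\ $\sigma=m/2>1$, is exactly what grants access to Keel--Tao's endpoint; the extension from $[0,T_0]$ to $[0,1]$ costs only a bounded factor by the unitarity of $e^{-itP}$ on $L^2$ and a covering of $[0,1]$ by $O(1)$ translates of $[0,T_0]$.

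For the furthermore assertion ($m=2$), the exponent $\sigma=1$ sits at the boundary of the Keel--Tao framework and the raw rank-$m$ dispersion does not suffice. Using $[P,\Delta_{g_0}]=0$ I would perform an additional joint spectral localization $\varphi(h^2P)\psi(\widetilde h^2\Delta_{g_0})$ with a second semiclassical scale $\widetilde h$: since $\Delta_{g_0}$ is everywhere elliptic, this joint cut-off restricts phase-space to a region where the phase Hessian picks up an additional non-degenerate direction coming from $\Delta_{g_0}$, effectively raising $\sigma$ above $1$, so that the preceding machinery applies; optimizing over $\widetilde h$ then recovers the full loss $\tfrac1p\bigl(\tfrac{2n}{m}-1\bigr)$. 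The principal technical obstacle throughout is the WKB step itself: constructing the parametrix and running the stationary-phase analysis under only the rank-$m$ assumption on a time window independent of $h$, while ensuring that the $n-m$ degenerate directions of the phase are handled purely through the semiclassical $\xi$-support of the cut-off rather than through genuine oscillation.
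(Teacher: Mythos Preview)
There are two genuine gaps. First, the Littlewood--Paley projectors $\varphi(h^2P)$ are the wrong objects for a non-elliptic $P$. The level sets $\{a^{jk}(x)\xi_j\xi_k=\lambda\}$ are non-compact in $\xi$ whenever the metric is indefinite or degenerate, so $\varphi(h^2P)$ is \emph{not} an $h$-PDO with compactly supported symbol, the amplitude in your WKB parametrix inherits no $\xi$-support, and the stationary-phase step cannot be set up. For the same reason the equivalence $\|u\|_{H^s}^2\sim\sum_h h^{-2s}\|\varphi(h^2P)u\|_{L^2}^2$ fails: $H^s(M)$ is defined through an auxiliary \emph{elliptic} Laplacian $\Delta_{g_0}$, not through $P$. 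The paper localizes with $\varphi(-h^2\Delta_{g_0})$ instead, which does produce compact $\xi$-support; the price is that $\varphi(-h^2\Delta_{g_0})$ does not commute with $e^{-itP}$, and this commutator has to be handled by a Duhamel formula together with the energy estimate of Lemma~\ref{proposition_energy_estimates}---an ingredient entirely absent from your outline.

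Second, on a general compact manifold the dispersive bound cannot hold on a window $[0,T_0]$ with $T_0$ independent of $h$. At frequency $\sim h^{-1}$ the Hamilton flow of $a^{jk}\xi_j\xi_k$ develops caustics at times of order $h$, so the WKB phase $S$ ceases to exist and the $|t|^{-m/2}$ decay breaks down; already on a flat torus (or on $\mathbb S^3\times\mathbb S^3$, cf.\ Section~\ref{section_optimality}) refocusing at times of order one forces the kernel back up to size $h^{-n}$, violating your claimed $h^{-n+m/2}$ bound. The correct statement (Proposition~\ref{proposition_dispersive}) is $\|e^{-itP}\varphi(-h^2\Delta_{g_0})\|_{L^1\to L^\infty}\le C|t|^{-m/2}h^{m-n}$ for $|t|\le\alpha h$ only; Keel--Tao then yields Strichartz on intervals of length $\lesssim h$ (Corollary~\ref{corollary_proof_main_theorem_2}), and the loss $\tfrac1p(\tfrac{2n}{m}-1)$ arises from covering $[0,1]$ by $\sim h^{-1}$ such intervals and summing. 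Finally, the ``furthermore'' clause is much simpler than your joint-localization scheme: when $[P,\Delta_{g_0}]=0$ the projector commutes with the propagator, the commutator terms in the Duhamel argument vanish, and Corollary~\ref{corollary_proof_main_theorem_2} together with Proposition~\ref{proposition_LP} gives \eqref{main_theorem_1_1} directly for every $\tfrac m2$-admissible pair, in particular for the non-endpoint pairs when $m=2$.
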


\begin{remark}
In the case $m=n$ the result of Theorem~\ref{main_theorem_1} extends \cite{BGT} (see also \cite{Staffilani_Tataru}) to the non-degenerate case.
\end{remark}
\begin{remark}
Let $(p,q)$ be $\frac m2$-admissible. Lemma \ref{proposition_energy_estimates} and the Sobolev inequality imply
$$
\norm{e^{-itP}u_0}_{L^q}\le C_t\norm{u_0}_{H^{\frac{2n}{pm}}(M)},\quad t\in\R.
$$
Compared to this bound we have a gain of $\frac 1p$ degrees of regularity in \eqref{main_theorem_1_1}, which is the same as for the elliptic case, even for the degenerate case. 
\end{remark}

As a corollary we also have Strichartz estimates for usual admissible pairs with a loss of derivatives depending on the degeneracy of metrics. 
\begin{corollary}
\label{main_corollary_1}
Let $n\ge2$, $\sigma\ge m/2$ and $\sigma>1$. Then, for any $\sigma$-admissible pair $(p,q)$, 
\begin{align}
\label{main_corollary_1_1}
\norm{e^{-itP}u_0}_{L^p([0,1];L^q(M))}\le C\norm{u_0}_{H^{\frac{\gamma(\sigma)}{p}}(M)},\quad u_0\in H^{\frac{\gamma(\sigma)}{p}}(M), 
\end{align}
where $\gamma(\sigma)=\frac{1}{\sigma}\left(n-\frac{m}{2}\right)$.
Furthermore, if there exists a smooth Riemannian metric $g_0$ on $M$ such that $P$ commutes with  $\Delta_{g_0}$, then \eqref{main_corollary_1_1} also holds for $(m,\sigma)=(2,1)$. In particular, for any $\frac n2$-admissible pair $(p,q)$ we have
$$
\norm{e^{-itP}u_0}_{L^p([0,1];L^q(M))}\le C\norm{u_0}_{H^{\frac 1p\left(2-\frac mn\right)}(M)},\quad u_0\in H^{\frac 1p\left(2-\frac mn\right)}(M).
$$
\end{corollary}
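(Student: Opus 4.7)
The plan is to deduce Corollary \ref{main_corollary_1} from Theorem \ref{main_theorem_1} by a single H\"older-in-time step, after choosing an appropriate auxiliary $\frac{m}{2}$-admissible pair. Given a $\sigma$-admissible pair $(p,q)$ with $\sigma \ge m/2$, I set $p_0 := (2\sigma/m)\, p$. A direct calculation starting from $\frac{1}{p} = \sigma\bigl(\frac{1}{2} - \frac{1}{q}\bigr)$ gives
\[
\frac{1}{p_0} + \frac{m/2}{q} = \frac{m}{2\sigma p} + \frac{m}{2}\Bigl(\frac{1}{2} - \frac{1}{\sigma p}\Bigr) = \frac{m}{4},
\]
so $(p_0, q)$ is $\frac{m}{2}$-admissible, with $p_0 \ge p \ge 2$ because $\sigma \ge m/2$. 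The motivation for this choice is that among $\frac{m}{2}$-admissible pairs $(p_0,q_0)$ with $p\le p_0$ and $q\le q_0$, the loss $\gamma_0/p_0$ in Theorem \ref{main_theorem_1} is minimized precisely when $q_0 = q$ and $p_0$ is as large as possible, which forces the value above.

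Applying Theorem \ref{main_theorem_1} to $(p_0, q)$ (invoking its extension under $[P,\Delta_{g_0}]=0$ when $m=2$) yields
\[
\norm{e^{-itP}u_0}_{L^{p_0}([0,1];L^q(M))} \le C\norm{u_0}_{H^{\gamma_0/p_0}(M)},\qquad \gamma_0 := \tfrac{2n}{m}-1.
\]
The key arithmetic identity
\[
\frac{\gamma_0}{p_0} = \frac{2n-m}{m}\cdot\frac{m}{2\sigma p} = \frac{n-m/2}{\sigma p} = \frac{\gamma(\sigma)}{p}
\]
shows that this is exactly the loss demanded by \eqref{main_corollary_1_1}. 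Since $[0,1]$ has finite measure and $p \le p_0$, H\"older's inequality in time produces $\norm{e^{-itP}u_0}_{L^p_t L^q_x} \le C\norm{e^{-itP}u_0}_{L^{p_0}_t L^q_x}$, completing the estimate.

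The remaining point is to check that no excluded endpoint is hit. The forbidden case $(m, p_0, q) = (2, 2, \infty)$ can occur only when $m = 2$, and then it forces $\sigma p = 2$, hence $p = 2$ and $\sigma = 1$; this is outside the range $\sigma > 1$ of the main claim, and it is precisely the case excluded from $\sigma$-admissibility in the extended claim where $(m,\sigma)=(2,1)$. In that extended case $p_0 = p$, so the corollary reduces directly to Theorem \ref{main_theorem_1} for $m = 2$, which is why the same commutation hypothesis carries over. The \emph{in particular} statement is the specialization $\sigma = n/2$. I do not anticipate any substantive obstacle here: the result is essentially a bookkeeping consequence of Theorem \ref{main_theorem_1}, and the only design choice is the value of $p_0$, uniquely forced by the requirement that the loss match $\gamma(\sigma)/p$.
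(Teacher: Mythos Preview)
Your argument is correct, and it is a genuinely different (and arguably cleaner) route than the paper's. The paper fixes the time exponent $p$, applies Theorem~\ref{main_theorem_1} at the $\frac{m}{2}$-admissible pair $(p,q_m)$ with $\frac{1}{q_m}=\frac12-\frac{2}{mp}$, and then interpolates in the space exponent between this and the trivial bound $\norm{e^{-itP}u_0}_{L^p([0,1];L^2)}=\norm{u_0}_{L^2}$; choosing the interpolation parameter $\theta=m/(2\sigma)$ recovers $(p,q)$ with loss $\gamma(\sigma)/p$. You instead fix the space exponent $q$, apply Theorem~\ref{main_theorem_1} at $(p_0,q)$ with $p_0=(2\sigma/m)p$, and then use only the embedding $L^{p_0}([0,1])\hookrightarrow L^{p}([0,1])$. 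The two choices are dual to each other and yield the same loss via the same arithmetic identity; your version avoids invoking complex interpolation and makes the dependence on Theorem~\ref{main_theorem_1} completely transparent, while the paper's interpolation would generalize more readily if one had a nontrivial estimate at $q=2$ to interpolate against. Your endpoint bookkeeping is accurate: for $\sigma>1$ one always has $q<\infty$, so the forbidden $\frac{m}{2}$-admissible endpoint is never hit, and the case $m=2$ inherits the commutation hypothesis from Theorem~\ref{main_theorem_1} exactly as in the paper's argument.
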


\begin{proof}
We fix $p\ge2$ arbitrarily and let $q_m=\frac12-\frac{2}{mp}$. An interpolation between \eqref{main_theorem_1_1} with the trivial bound
$
\norm{e^{-itP}u_0}_{L^p([0,1];L^2(M))}=\norm{u_0}_{L^2(M)}
$
implies
$$
\norm{e^{-itP}u_0}_{L^p([0,1];L^{q_\theta}(M))}\le C\norm{u_0}_{H^{\frac {s_\theta}{p}}(M)},
$$
where $\theta\in[0,1]$ and 
$$
\frac{1}{q_\theta}=\frac{\theta}{q_m}+\frac{1-\theta}{2}=\frac12-\frac{2\theta}{mp},\quad
s_\theta=\left(\frac{2n}{m}-1\right)\theta.
$$
Choosing $\sigma=\frac{m}{2\theta}$ we obtain desired estimates.
\end{proof}

\begin{remark}
\label{remark_1}
While this note is concerned with the compact manifold case only, Theorem \ref{main_theorem_1} still holds for the operator $P$ of the form \eqref{P} posed on $\R^n$ such that $P$ satisfies (H1), (H2) and (H3). The proof is essentially same. Since $\gamma(n-\frac m2)=1$ our result is an improvement of \cite{Salort} in which Strichartz estimates with a loss of derivatives $\frac1p+\ep$ for arbitrarily small $\ep>0$ have been proved for $(n-\frac m2)$-admissible pair, under the additional assumption that the energy estimate is satisfied (see Lemma \ref{proposition_energy_estimates}).  
\end{remark}

\section{Proof of Theorem \ref{main_theorem_1}}
\label{section_3}
In this section we prove Theorem \ref{main_theorem_1}. Let $g_0$ be a Riemannian metric on $M$ and $\Delta_{g_0}$ the associated Laplace-Beltrami operator. 
Note that the ellipticity of $\Delta_{g_0}$ implies the norm equivalence
$$
\norm{f}_{H^s(M)}\approx \norm{(1-\Delta_{g_0})^{s/2}f}_{L^2(M)},\quad s\in\R.
$$

Firstly we record several known results on the semiclassical functional calculus and the square function estimates with respect to the semiclassical spectral multiplier $\varphi(-h^2\Delta_{g_0})$. 
Let us recall the definition of semiclassical pseudodifferential operator ($h$-PDO for short),  which will be used throughout the paper. To a symbol $a\in C^\infty(\R^{2n})$ and $h\in(0,1]$ we associate the $h$-PDO, $a(x,hD):C_0^\infty(\R^n)\to C^\infty(\R^n)$, defined by
$$
a(x,hD)f(x)=(2\pi h)^{-n}\int e^{i(x-y)\cdot\xi/h}a(x,\xi)f(y)dyd\xi. 
$$
It is well known as the Calder\'on-Vaillancourt theorem that $a(x,hD)$ is bounded on $L^2(\R^n)$ uniformly in $h\in(0,1]$ if $a\in C_b^\infty(\R^{2n})$, \emph{i.e.}, $|\dderiv{x}{\xi}{\alpha}{\beta} a(x,\xi)|\le C_{\alpha\beta}$ for any $\alpha,\beta$. Moreover, if $a\in C_0^\infty(\R^{2n})$ then, for any $1\le p\le q\le\infty$, $a(x,hD)$ is bounded from $L^p(\R^n)$ to $L^q(\R^n)$:
$$
\norm{a(x,hD)f}_{L^q(\R^n)}\le C_{pq}h^{-n\left(\frac1p-\frac1q\right)}\norm{f}_{L^p(\R^n)},\quad h\in(0,1]. 
$$

Using $h$-PDO's, one obtains an approximation theorem of the semiclassical spectral multiplier and the square function estimates. 

\begin{proposition}[\cite{BGT}]
\label{proposition_functional_calculus}
Let $\varphi\in C_0^\infty(\R)$, $\kappa:\R^n\supset U\to V\subset M$ a  coordinate patch  and $\chi_1,\chi_2\in C_0^\infty(V)$ such that $\chi_2\equiv1$ near $\supp \chi_1$. Then  there exists a sequence $\psi_j\in C_0^\infty(U\times\R^n;\R)$ such that the following statements are satisfied:
\begin{itemize}
\item $\psi_j$ are supported in $\{(x,\xi)\in U\times\R^n;\ \kappa(x)\in \supp \chi_1,\ |\xi|_{g_0}^2\in \supp\varphi\}$, where 
$$
|\xi|_{g_0}^2=g_0^{jk}(x)\xi_j\xi_k,\quad (g_0^{jk})=(g_{0,jk})^{-1},\ (x,\xi)\in T^*M,
$$ is the principal symbol of $-\Delta_{g_0}$.
\item  For any integer $N\ge0$ and $\sigma\in[0,N]$ there exists $C_{N,\sigma}>0$ such that for any $h\in(0,1]$, 
$$
\bignorm{\chi_1\varphi(-h^2\Delta_{g_0})f-\sum_{j=0}^{N-1}h^j\kappa_*\psi_j(x,hD)\kappa^*(\chi_2f)}_{H^\sigma(M)}\le C_{N,\sigma}h^{N-\sigma}\norm{f}_{L^2(M)}.
$$
In particular, for any $1\le p\le q\le\infty$ there exists $C_{p,q}>0$ such that for any $h\in(0,1]$, 
$$
\norm{\varphi(-h^2\Delta_{g_0})f}_{L^q(M)}\le C_{p,q}h^{-n\left(\frac1p-\frac1q\right)}\norm{f}_{L^p(M)}.
$$
\end{itemize}
\end{proposition}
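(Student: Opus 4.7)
My plan is to prove this via the Helffer-Sj\"ostrand formula combined with a resolvent parametrix constructed in the chart $\kappa$. Take an almost analytic extension $\wtilde\varphi\in C_0^\infty(\C)$ of $\varphi$ with $|\bar\partial \wtilde\varphi(z)|\le C_M|\Im z|^M$ for every $M\ge 1$, so that
$$
\varphi(-h^2\Delta_{g_0})=-\frac{1}{\pi}\int_\C \bar\partial \wtilde\varphi(z)\,(-h^2\Delta_{g_0}-z)^{-1}\,dL(z).
$$
The problem thereby reduces to approximating the resolvent, uniformly in $z\in\supp\wtilde\varphi\setminus\R$, in such a way that the inevitable $|\Im z|^{-k}$ losses are absorbed by $\bar\partial\wtilde\varphi$.

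In the chart, $-h^2\Delta_{g_0}$ is an $h$-PDO with principal symbol $|\xi|_{g_0}^2$. For $\Im z\neq 0$, the standard elliptic semiclassical symbol calculus produces compactly supported symbols $b_j(x,\xi;z)$, with $x$-support in $\kappa^{-1}(\supp\chi_2)$ and $\xi$-support in a neighborhood of $\{|\xi|_{g_0}^2=\Re z\}$, such that $B_N(z):=\sum_{j=0}^{N-1}h^j b_j(x,hD;z)$ is a right parametrix of $-h^2\Delta_{g_0}-z$ on $\kappa^*\chi_2$ with remainder $O(h^N)$ and seminorms polynomially bounded in $|\Im z|^{-1}$. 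Plugging into the Helffer-Sj\"ostrand integral and defining
$$
\psi_j(x,\xi)=-\frac{\chi_1(\kappa(x))}{\pi}\int_\C \bar\partial\wtilde\varphi(z)\,b_j(x,\xi;z)\,dL(z),
$$
one obtains symbols in the announced class and the expansion for $\chi_1\varphi(-h^2\Delta_{g_0})\kappa^*\chi_2 f$ with error $O(h^{N-\sigma})$ in $H^\sigma(M)$; the reality of the $\psi_j$ follows from the standard trick of averaging $\wtilde\varphi$ with its complex conjugate since $\varphi$ is real.

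The main obstacle, as I see it, is to replace $\chi_1\varphi(-h^2\Delta_{g_0})\kappa^*\chi_2$ by $\chi_1\varphi(-h^2\Delta_{g_0})$ inside the norm. Since $\chi_1(1-\chi_2)\equiv 0$, this amounts to bounding
$$
\chi_1\varphi(-h^2\Delta_{g_0})(1-\chi_2) = -\chi_1[\varphi(-h^2\Delta_{g_0}),\chi_2]
$$
in the $L^2\to H^\sigma(M)$ operator norm by $O(h^\infty)$. This is the semiclassical pseudolocal property: iterating the identity $[(-h^2\Delta_{g_0}-z)^{-1},\chi_2]=(-h^2\Delta_{g_0}-z)^{-1}[-h^2\Delta_{g_0},\chi_2](-h^2\Delta_{g_0}-z)^{-1}$ $N$ times inside the Helffer-Sj\"ostrand formula, and noting that $[-h^2\Delta_{g_0},\chi_2]$ is $h$ times a first-order $h$-PDO, gains a factor $h^N$ at the cost of $|\Im z|^{-N}$, again absorbed by $\bar\partial\wtilde\varphi$. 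Finally, the $L^p\to L^q$ bound is a direct corollary of the first part: taking $N$ large, each $\psi_j(x,hD)$ has kernel $h^{-n}K_j(x,(x-y)/h)$ with $K_j(x,\cdot)$ Schwartz uniformly in $x$, so Young's inequality gives the factor $h^{-n(1/p-1/q)}$, and a finite partition of unity on $M$ globalizes the estimate.
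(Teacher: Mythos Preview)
The paper does not prove this proposition itself; it simply records the statement and refers to \cite{BGT} for the proof. Your Helffer--Sj\"ostrand argument---semiclassical resolvent parametrix in the chart, integration of the $b_j(\cdot;z)$ against $\bar\partial\wtilde\varphi$ to produce the $\psi_j$, the iterated-commutator pseudolocal bound for $\chi_1[\varphi(-h^2\Delta_{g_0}),\chi_2]$, and the Schur/Young kernel estimate for the $L^p\to L^q$ bound---is exactly the scheme carried out in \cite{BGT}, so your proposal is correct and matches the cited proof.
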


\begin{proposition}[\cite{BGT}]
\label{proposition_LP}
Consider a $4$-adic partition of unity on $[1,\infty)$:
$$
\varphi\in C_0^\infty(\R),\quad\supp\varphi\subset(1/4,4),\quad0\le \varphi\le 1,\quad \sum_{j=0}^\infty\varphi(2^{-2j}\lambda)=1\ \text{for}\ \lambda\ge1.
$$
Then, for any $q\in[2,\infty)$, 
$$
\norm{v}_{L^q(M)}\le C_q\norm{v}_{L^2(M)}+C_q\Big(\sum_{j=0}^\infty\norm{\varphi(-2^{-2j}\Delta_{g_0})v}_{L^q(M)}^2\Big)^{1/2}.
$$
\end{proposition}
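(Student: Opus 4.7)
\emph{Proof plan.} The approach is a low/high-frequency decomposition, followed by duality and a Khintchine randomization that reduces the inequality to a spectral multiplier estimate for $-\Delta_{g_0}$. Pick $\psi\in C_0^\infty(\R)$ with $\psi(\lambda)+\sum_{j\ge0}\varphi(2^{-2j}\lambda)=1$ for $\lambda\ge0$, set $\varphi_j:=\varphi(-2^{-2j}\Delta_{g_0})$, and decompose $v=\psi(-\Delta_{g_0})v+\sum_j\varphi_jv$. Proposition \ref{proposition_functional_calculus} applied with $h=1$ gives $\norm{\psi(-\Delta_{g_0})v}_{L^q(M)}\le C\norm{v}_{L^2(M)}$, so it only remains to bound $\norm{\sum_j\varphi_jv}_{L^q(M)}$ by the square function.

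Choose next $\tilde\varphi\in C_0^\infty((0,\infty))$ with $\tilde\varphi\equiv 1$ on $\supp\varphi$, and set $\tilde\varphi_j:=\tilde\varphi(-2^{-2j}\Delta_{g_0})$, so that $\varphi_j\tilde\varphi_j=\varphi_j$. For $g\in L^{q'}(M)$ with $q'=q/(q-1)\in(1,2]$, self-adjointness of the functional calculus (applied twice, inserting the identity $\varphi_j\tilde\varphi_j=\varphi_j$ in between) yields
$$
\int_M\Big(\sum_j\varphi_jv\Big)\bar g\,d\mu=\sum_j\int_M(\varphi_jv)\,\overline{\tilde\varphi_jg}\,d\mu.
$$
Cauchy-Schwarz in $j$ followed by H\"older in $x$ and then duality in $L^{q'}$ reduce the claim to the vector-valued estimate
$$
\bignorm{\Big(\sum_j|\tilde\varphi_jg|^2\Big)^{1/2}}_{L^{q'}(M)}\le C\norm{g}_{L^{q'}(M)}.
$$

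For this last estimate I bring in independent Rademacher variables $\{\epsilon_j\}$ on a probability space and apply Khintchine's inequality to obtain
$$
\bignorm{\Big(\sum_j|\tilde\varphi_jg|^2\Big)^{1/2}}_{L^{q'}(M)}^{q'}\sim \mathbb{E}_\epsilon\norm{m_\epsilon(-\Delta_{g_0})g}_{L^{q'}(M)}^{q'},\quad m_\epsilon(\lambda):=\sum_j\epsilon_j\tilde\varphi(2^{-2j}\lambda).
$$
Because $\tilde\varphi\in C_0^\infty((0,\infty))$, at every $\lambda>0$ only boundedly many terms in $m_\epsilon$ are non-zero, and differentiating $\tilde\varphi(2^{-2j}\cdot)$ produces factors $2^{-2j}\sim\lambda^{-1}$; hence $m_\epsilon$ satisfies the Mihlin-H\"ormander bounds $|\lambda^k m_\epsilon^{(k)}(\lambda)|\le C_k$ uniformly in $\epsilon$. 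The spectral multiplier theorem for $-\Delta_{g_0}$ on the compact Riemannian manifold $(M,g_0)$ then gives $\norm{m_\epsilon(-\Delta_{g_0})}_{L^{q'}\to L^{q'}}\le C$ uniformly in $\epsilon$ for every $1<q'<\infty$, closing the reduction. I expect the spectral multiplier bound to be the main obstacle, being the only genuinely non-trivial analytic input: on a compact Riemannian manifold it follows from Gaussian upper bounds on the heat kernel of $e^{t\Delta_{g_0}}$ combined with standard Calder\'on-Zygmund theory, though an argument more self-consistent with the framework of the present paper can be given by localizing each $\tilde\varphi_j$ into coordinate patches via Proposition \ref{proposition_functional_calculus} at scale $h=2^{-j}$ and handling the randomized dyadic sum via a Cotlar-Stein almost-orthogonality argument across scales.
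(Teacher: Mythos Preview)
The paper does not give its own proof of this proposition; it simply refers the reader to \cite{BGT} (where it appears as Corollary~2.3). Your plan is a correct and standard route to the result: the low-frequency piece is handled by the $L^2\to L^q$ bound from Proposition~\ref{proposition_functional_calculus} with $h=1$; for the high-frequency piece, duality plus Cauchy--Schwarz in $j$ (together with Minkowski for $q\ge 2$, which is needed to pass from $\bignorm{(\sum_j|\varphi_jv|^2)^{1/2}}_{L^q}$ to $(\sum_j\norm{\varphi_jv}_{L^q}^2)^{1/2}$) reduces matters to the dual square-function bound, and Khintchine then reduces that to a uniform Mihlin--H\"ormander spectral multiplier estimate for $-\Delta_{g_0}$. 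On a compact Riemannian manifold such a multiplier theorem is indeed available (Seeger--Sogge, or via Gaussian heat-kernel bounds and Calder\'on--Zygmund theory as you indicate), so the argument closes; this is essentially the approach taken in \cite{BGT} as well.
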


For the proof of Propositions \ref{proposition_functional_calculus} and \ref{proposition_LP}, we refer to \cite{BGT}. If $P$ commutes with $\Delta_{g_0}$ (and so hence does with $\varphi(-h^2\Delta_{g_0})$), then using Proposition \ref{proposition_LP}, the energy conservation 
$\varphi(-h^2\Delta_{g_0})e^{-itP}=e^{-itP}\varphi(-h^2\Delta_{g_0})$ 
and the norm equivalence 
\begin{align}
\label{almost_orthogonality}
\norm{f}_{H^s(M)}\approx \norm{f}_{L^2(M)}+\Big(\sum_{j=0}^\infty 2^{-2sj}\norm{\varphi(-2^{-2j}\Delta_{g_0})f}_{L^2(M)}^2\Big)^{\frac12},
\end{align}
one can see by a similar argument as that in \cite{BGT} that Theorem \ref{main_theorem_1} follows from the following semiclassical Strichartz estimates:
$$
\norm{\varphi(-h^2\Delta_{g_0})e^{-itP}u_0}_{L^p([0,1];L^q(M))}\le Ch^{-\frac1p\left(\frac{2n}{m}-1\right)}\norm{u_0}_{L^2(M)},\quad h\in(0,1].
$$
However, this is not in the case for our Hamiltonian $P$ in general. To overcome this difficulty we use the following energy estimate instead of the energy conservation. 

\begin{lemma}
\label{proposition_energy_estimates}
For any $s\in\R$ there exists $C_s>0$ such that for any $u_0\in H^s(M)$ and $t\in\R$,
$$
\norm{e^{-itP}u_0}_{H^s(M)}\le e^{C_s|t|}\norm{u_0}_{H^s(M)}.
$$
\end{lemma}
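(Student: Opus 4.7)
The plan is to derive the bound from a standard energy identity combined with Gronwall's inequality. Set $\Lambda:=(1-\Delta_{g_0})^{1/2}$, so that $\|f\|_{H^s(M)}\approx\|\Lambda^s f\|_{L^2(M)}$, and let $u(t):=e^{-itP}u_0$. Using that $u$ solves $i\partial_t u=Pu$ and that both $P$ and $\Lambda^s$ are self-adjoint on $L^2(M,d\mu)$, one computes
\[
\frac{d}{dt}\|\Lambda^s u(t)\|_{L^2(M)}^{2}
= 2\,\mathrm{Im}\,\langle \Lambda^s Pu,\Lambda^s u\rangle
= \frac{1}{i}\langle [\Lambda^{2s},P]u,u\rangle,
\]
which is real because $[\Lambda^{2s},P]$ is skew-adjoint. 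The whole proof thus reduces to the commutator bound
\[
\bigl|\langle [\Lambda^{2s},P]u,u\rangle\bigr|\le C_s\|\Lambda^s u\|_{L^2}^{2},
\]
since Gronwall then yields $\|\Lambda^s u(t)\|_{L^2}\le e^{C_s|t|/2}\|\Lambda^s u_0\|_{L^2}$.

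To prove the commutator bound I would localize to coordinate charts with a finite partition of unity and invoke Proposition~\ref{proposition_functional_calculus} to realize $\Lambda^{2s}$, modulo smoothing remainders, as a classical PDO of order $2s$ with principal symbol $(1+|\xi|_{g_0}^{2})^s$. The commutator $[\Lambda^{2s},P]$ is then a PDO of order $2s+1$ whose leading symbol $\tfrac{2}{i}\{(1+|\xi|_{g_0}^{2})^s,p\}$ is purely imaginary, since the principal symbol $p=a^{jk}\xi_j\xi_k$ of $P$ is real. Applying the symbolic calculus, one rewrites $\langle[\Lambda^{2s},P]u,u\rangle$, after symmetrization and integration by parts, as a sum of pairings of the form $\langle B\,\Lambda^s u,\Lambda^s u\rangle$ with $B$ bounded on $L^2(M)$ by the Calder\'on-Vaillancourt theorem, plus strictly smoothing remainders. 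The bound $|\langle B\Lambda^s u,\Lambda^s u\rangle|\le C\|\Lambda^s u\|_{L^2}^{2}$ is then immediate.

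The main obstacle lies precisely in that symmetrization step. A naive order count gives only $\|[\Lambda^{2s},P]u\|_{L^2}\lesssim\|u\|_{H^{2s+1}}$, hence $|\langle [\Lambda^{2s},P]u,u\rangle|\lesssim\|u\|_{H^{s+1/2}}^{2}$, which is strictly stronger than $\|\Lambda^s u\|_{L^2}^{2}$ and therefore insufficient to close the Gronwall argument. The gain from $H^{s+1/2}$ down to $H^{s}$ has to be extracted from the skew-symmetric structure: self-adjointness of $P$ with respect to $d\mu$ (equivalently, reality of $p$) combined with self-adjointness of $\Lambda^{2s}$ produces a cancellation at the symbolic level that re-expresses the pairing as a bounded quadratic form in $\Lambda^s u$. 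An alternative, more pedestrian route avoiding the full PDO calculus is to establish the estimate first for non-negative even integers $s=2k$, where $\Lambda^{2s}=(1-\Delta_{g_0})^{2k}$ is a genuine differential operator and the commutator bound can be obtained directly by repeated integration by parts in local coordinates, and then extend to arbitrary $s\in\mathbb R$ by complex interpolation (applied to the analytic family $e^{-itP}$) together with duality.
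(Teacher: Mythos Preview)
Your approach is essentially the same as the paper's: an energy identity for $v=\Lambda^{s}u$, a commutator bound, and Gronwall. The difference is only in how the commutator step is organized. You work with $\langle[\Lambda^{2s},P]u,u\rangle$ and then have to extract the half-derivative gain by a ``symmetrization'' that you yourself flag as the delicate point. The paper short-circuits this: writing $i\partial_t v=(P+B)v$ with $B=[\Lambda^{s},P]\Lambda^{-s}$, one gets $\tfrac{d}{dt}\|v\|_{L^2}^{2}=-i\langle(B-B^{*})v,v\rangle$, and the observation is that $B$ and $B^{*}$ share the same order-one principal symbol $\{(1+|\xi|_{g_0}^{2})^{s/2},|\xi|_a^{2}\}(1+|\xi|_{g_0}^{2})^{-s/2}$, so $B-B^{*}$ is of order zero and hence $L^{2}$-bounded by Calder\'on--Vaillancourt. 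That single line \emph{is} the skew-symmetry cancellation you are looking for, packaged so that no separate symmetrization or integration by parts is required; factoring $\Lambda^{-s}$ on one side rather than commuting $\Lambda^{2s}$ is what makes the cancellation visible at the level of principal symbols. Your fallback route via even integers and interpolation/duality is correct but unnecessary once one sees this.
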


\begin{proof}
We may assume $t\ge0$ without loss of generality since the proof for the opposite case is analogous. We put $v=(1-\Delta_{g_0})^{\frac s2}e^{-itP}u_0$ which solves
$$
(i\partial_t-P)v=Bv;\quad v|_{t=0}=(1-\Delta_{g_0})^{\frac s2}u_0,
$$
where $B=[(1-\Delta_{g_0})^{\frac s2},P](1-\Delta_{g_0})^{-\frac s2}$ and its symbol, in local coordinates, is given by
$$
\{(1+|\xi|_{g_0}^2)^{\frac{s}{2}},|\xi|_a^2\}(1+|\xi|_{g_0}^2)^{-\frac{s}{2}}+r_1(x,\xi)
$$ 
with some symbol $r_1$ of order zero, where $|\xi|_a^2:=a^{jk}(x)\xi_j\xi_k$ is the principal symbol of $P$ and $\{\cdot,\cdot\}$ is the Poisson bracket (see, \emph{e.g.}, H\"ormander \cite{Hormander}). We also learn by the symbolic calculus that the symbol of the adjoint $B^*$ is of the form 
$$
\{(1+|\xi|_{g_0}^2)^{\frac{s}{2}},|\xi|_a^2\}(1+|\xi|_{g_0}^2)^{-\frac{s}{2}}+r_2(x,\xi)
$$
with some symbol $r_2$ of order zero. In particular, $B-B^*$ has a bounded symbol and hence is bounded on $L^2(M)$ by the Calder\'on-Vaillancourt theorem. Now we compute
\begin{align*}
\frac{d}{dt}\norm{v(t)}_{L^2(M)}^2
&=\<-i(P+B)v(t),v(t)\>+\<v(t),-i(P+B)v(t)\>\\
&=-i\<(B-B^*)v(t),v(t)\>\\
&\le C_s\norm{v(t)}_{L^2(M)}^2.
\end{align*}
Integrating with respect to $t$ then implies the assertion. 
\end{proof}

We next state the following micro-localized dispersion estimate which is crucial to prove semiclassical Strichartz estimates.

\begin{proposition}
\label{proposition_dispersive}
For any $\varphi\in C_0^\infty(\R)$ supported in $(0,\infty)$ and for sufficiently small $\alpha>0$, there exists $C_\alpha>0$ such that for any $h\in(0,1]$, $|t|\le\alpha h$ and any $u_0\in C_0^\infty(M)$, 
\begin{align}
\label{proposition_dispersive_1}
\norm{e^{-itP}\varphi(-h^2\Delta_{g_0})u_0}_{L^\infty(M)}\le C_\alpha |t|^{-\frac {m}2}h^{{m}-n}\norm{u_0}_{L^1(M)}.
\end{align}
\end{proposition}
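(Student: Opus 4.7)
The plan is to construct a WKB parametrix for $e^{-itP}\varphi(-h^2\Delta_{g_0})$ valid on the short time window $|t|\le\alpha h$ in each local coordinate chart, then extract the dispersive bound by stationary phase, using \emph{(H2)} to produce the $|t|^{-m/2}$ factor.

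First, Proposition~\ref{proposition_functional_calculus} combined with a sufficiently fine partition of unity on $M$ reduces \eqref{proposition_dispersive_1} to bounding the Schwartz kernel of $e^{-itP}\kappa_*\psi(x,hD)\kappa^*$, where $\psi\in C_0^\infty(U\times\R^n)$ is supported in $\{|\xi|_{g_0}^2\in\supp\varphi\}$, up to an $O(h^N)$ remainder which contributes $O(h^{N-n})$ in $L^1\to L^\infty$. After refining the partition, \emph{(H2)} allows us to assume in addition that a fixed $m\times m$ submatrix $(a^{jk})_{j,k=1}^m$ of the principal symbol is uniformly invertible on the chart.

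Second, in local coordinates I seek a WKB ansatz
$$
v(t,x)=(2\pi h)^{-n}\iint e^{i(\phi(t/h,x,\xi)-y\cdot\xi)/h}A(t,x,\xi,h)u_0(y)\,dy\,d\xi
$$
for $e^{-itP}\psi(x,hD)u_0$, with $A$ compactly supported in $(x,\xi)$. Substituting into $(i\partial_t-P)v=0$ and matching powers of $h$ forces the phase $\phi(\tau,x,\xi)$ to solve the $h$-independent eikonal equation
$$
\partial_\tau\phi+|\nabla_x\phi|_a^2=0,\qquad \phi(0,x,\xi)=x\cdot\xi,
$$
which, by the method of characteristics, admits a smooth solution for $|\tau|\le\alpha$ small and $(x,\xi)$ in a neighbourhood of $\supp\psi$; Taylor-expanding in $\tau$ gives $\phi(\tau,x,\xi)=x\cdot\xi-\tau|\xi|_a^2+O(\tau^2)$ and $\phi_{\xi\xi}(\tau,x,\xi)=-2\tau\,a(x)+O(\tau^2)$. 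The amplitude is built as $A=\sum_{j=0}^{N-1}h^jA_j$ from a cascade of transport equations initialised by $A_0(0,\cdot,\cdot)=\psi$, and the resulting $O(h^N)$ error in $(i\partial_t-P)v$ is absorbed via Duhamel and Lemma~\ref{proposition_energy_estimates}.

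The Schwartz kernel of the parametrix is
$$
K(t,x,y)=(2\pi h)^{-n}\int e^{i(\phi(t/h,x,\xi)-y\cdot\xi)/h}A(t,x,\xi,h)\,d\xi.
$$
Splitting $\xi=(\xi',\xi'')\in\R^m\times\R^{n-m}$ in accordance with the non-degenerate block furnished by \emph{(H2)}, the Hessian of the phase in $\xi'$ is $-2(t/h)\,a(x)|_{m\times m}+O((t/h)^2)$, of determinant bounded below by $c(|t|/h)^m$ for $\alpha$ small enough. Stationary phase in $\xi'$ therefore produces a factor $(2\pi h)^{m/2}(|t|/h)^{-m/2}=Ch^m|t|^{-m/2}$, while integrating the resulting bounded amplitude over the compact $\xi''$-support contributes $O(1)$ uniformly in $(t,h,y)$; multiplication by the $(2\pi h)^{-n}$ prefactor then gives $|K(t,x,y)|\le Ch^{m-n}|t|^{-m/2}$, which is \eqref{proposition_dispersive_1}. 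The main technical point I expect is ensuring that the $\xi''$-integration introduces no hidden $h$-dependence: one needs the $\xi'$-stationary phase estimate to be uniform in $\xi''$ on the support, and this follows from the smoothness of the critical value together with the uniform lower bound on $|\det\phi_{\xi'\xi'}|$ from \emph{(H2)}.
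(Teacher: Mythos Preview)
Your proposal is correct and follows essentially the same route as the paper: reduce to a local chart via Proposition~\ref{proposition_functional_calculus} and a partition of unity, build a short-time WKB/FIO parametrix for $e^{-itP}\psi(x,hD)$, and then apply stationary phase in the $m$ non-degenerate $\xi$-directions furnished by (H2) to produce the $|t|^{-m/2}h^{m-n}$ kernel bound. The paper simply outsources the WKB construction and the $L^1\to L^\infty$ remainder estimate to Salort~\cite{Salort} (with the explicit loss $h^{N-5n/2}$, disposed of by taking $N>5n/2$), whereas you sketch the eikonal/transport hierarchy directly and absorb the remainder via Duhamel and Lemma~\ref{proposition_energy_estimates} together with Sobolev embedding; both lead to the same conclusion.
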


\begin{proof}
The proof is essentially same as that in Salort \cite{Salort} (see also \cite[Lemma 2.7]{BGT}) and we hence outline it only. 
Let $\kappa: \R^n\Supset U\to V\Subset M$ be a coordinate patch and let $U_1\Subset U$. Choosing a sufficiently small coordinate neighborhood $U$ if necessary we may assume without loss of generality that $P$ may be of the form \eqref{P} and satisfies (H1) and (H2) in $U$. By virtue of Proposition \ref{proposition_functional_calculus} and the partition of unity argument, it suffices to show that the following dispersion estimate holds:
\begin{align}
\label{proof_dispersive_1}
\norm{e^{-ithP}\psi(x,hD)f}_{L^\infty(\R^n)}\le C_\alpha |th|^{-\frac {m}2}h^{{m}-n}\norm{f}_{L^1(\R^n)},
\end{align}
where $|t|\le\alpha$, $h\in(0,1]$, $f\in C_0^\infty(U)$ and $ \psi\in C_0^\infty(U_1\times\R^n)$. To prove this bound we use the semiclassical WKB approximation. It has been proved by Salort \cite[Propositions 3 and 4, and Lemma 3]{Salort} that for any integer $N\ge1$ there exists $\alpha_N>0$ such that
$$
\sup_{|t|<\alpha_N}\norm{\left(e^{-ithP}\psi(x,hD)-J_N(t,h)\right)f}_{L^\infty(\R^n)}\le C_Nh^{N-\frac{5n}{2}}\norm{f}_{L^1(\R^n)},
$$
where $J_N(t,h)$ is a semiclassical Fourier integral operator of the form
$$
J_N(t,h)f(x)=\frac{1}{(2\pi h)^{n}}\int e^{\frac ih(\Phi(t,x,\xi)-y\cdot\xi)}\sum_{j=0}^{N-1}h^ja_j(t,x,\xi) f(y)dyd\xi. 
$$
with $a_j\in C_0^\infty((-\alpha_N,\alpha_N)\times U_1\times\R^{n})$ and $\Phi\in C^\infty((-\alpha_N,\alpha_N)\times U_1\times\R^{n})$ satisfying
$$
\frac{\partial_{\xi_j}\partial_{\xi_k}\Phi(t,x,\xi)}{t}=-a^{jk}(x)+O(|t|)\ \text{as}\ |t|\to0. 
$$
By (H2), the partial Hessian of $\Phi/t$ in $m$ directions, $\Hess_m(\Phi/t)=-(b^{jk}(x))_{j,k=1}^m+O(|t|)$, is non-degenerate if $\alpha_N$ is small enough. Therefore we can apply the stationary phase method to obtain
$$
\norm{J_N(t,h)f}_{L^\infty(\R^n)}\le C_{N,m}|th^{-1}|^{-\frac m2}h^{-n}\norm{f}_{L^1(\R^n)},\quad |t|\le\alpha_N,\ h\in(0,1],
$$
which, together with the above error bound, implies \eqref{proof_dispersive_1} provided that $N>\frac{5n}{2}$. 
\end{proof}

Set 
$
\gamma=\frac{2n}{m}-1. 
$ 
Since $\varphi(-h^2\Delta_{g_0})$ is bounded on $L^\infty(M)$, using the change of variable $t\mapsto (t-s)h^{1-\gamma}$ and \eqref{proposition_dispersive_1}, we have
$$
\norm{\varphi(-h^2\Delta_{g_0})e^{-ih^{1-\gamma}(t-s)P}\varphi(-h^2\Delta_{g_0})u_0}_{L^\infty(M)}\le C_\alpha |t-s|^{-\frac m2}\norm{u_0}_{L^1(M)}
$$
for $|t-s|\le\alpha h^{\gamma}$ and $t\neq s$. The $TT^*$-argument due to \cite{Keel_Tao} then provides the following.

\begin{corollary}
\label{corollary_proof_main_theorem_2}
Let $n\ge2$, $\varphi$ as above. Then, for any interval $I_{h}$ with length $|I_{h}|\lesssim {h}$ and any $\frac m2$-admissible pair $(p,q)$ there exists $C>0$ such that for any $h\in(0,1]$, 
\begin{align*}
\norm{\varphi(-h^2\Delta_{g_0})e^{-itP}f}_{L^p(I_h;L^{q}(M))}
&\le Ch^{\frac {1-\gamma}{ p}}\norm{f}_{L^2(M)},\\
\bignorm{\int_{s<t}\chi_{I_h}(s)\varphi(-h^2\Delta_{g_0})e^{-i(t-s) P}\varphi(-h^2\Delta_{g_0})F(s)ds}_{L^p(I_h;L^{q}(M))}
&\le Ch^{\frac{1-\gamma}{ p}}\norm{F}_{L^1(I_h;L^2(M))},
\end{align*}
where $\chi_{I_h}$ is the characteristic function of $I_h$. 
\end{corollary}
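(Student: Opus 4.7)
The plan is to apply the abstract $TT^*$ estimates of Keel and Tao \cite{Keel_Tao} to a suitably time-rescaled propagator for which both the energy and dispersion hypotheses are already in hand. First I would introduce
\[
A(t):=\varphi(-h^2\Delta_{g_0})\,e^{-ih^{1-\gamma}tP}.
\]
Since $e^{-itP}$ is unitary on $L^2(M)$ and $\varphi(-h^2\Delta_{g_0})$ is uniformly $L^2$-bounded in $h$ (Proposition \ref{proposition_functional_calculus} with $p=q=2$), the operator $A(t)$ is uniformly $L^2$-bounded in $t$ and $h$. As $\varphi$ is real-valued, $\varphi(-h^2\Delta_{g_0})$ is self-adjoint, so $A(t)A(s)^*=\varphi(-h^2\Delta_{g_0})\,e^{-ih^{1-\gamma}(t-s)P}\,\varphi(-h^2\Delta_{g_0})$, and the dispersion estimate displayed immediately before the corollary reads
\[
\norm{A(t)A(s)^*g}_{L^\infty(M)}\le C\abs{t-s}^{-m/2}\norm{g}_{L^1(M)}\qquad\text{for }\abs{t-s}\le\alpha h^\gamma.
\]

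Next I would invoke \cite{Keel_Tao} with decay exponent $\sigma=m/2$ on the time window $I':=[-\alpha h^\gamma,\alpha h^\gamma]$: the admissibility $\frac{1}{p}+\frac{\sigma}{q}=\frac{\sigma}{2}$, $p\ge2$, $(\sigma,p,q)\ne(1,2,\infty)$ in that paper is exactly the $\frac{m}{2}$-admissibility \eqref{sigma_admissible}, and the restriction $|t-s|\le \alpha h^\gamma$ on the dispersion is compatible with the target window since $I'-I'\subset[-2\alpha h^\gamma,2\alpha h^\gamma]$. This yields
\[
\norm{A(t)f}_{L^p(I';L^q(M))}\le C\norm{f}_{L^2(M)}
\]
together with the corresponding retarded estimate for $\int_{s<t}\chi_{I'}(s)A(t)A(s)^*F(s)\,ds$.

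Finally I would undo the time rescaling by substituting $\tau=h^{1-\gamma}t$. The Jacobian $h^{\gamma-1}\,d\tau$ in $dt$ produces the factor $h^{(1-\gamma)/p}$ on the $L^p$-in-time norm (and on the right-hand side of the retarded estimate the $h^{\gamma-1}$ Jacobian on $ds$ matches the $L^1$-norm exactly), while $I'$ is mapped to a $\tau$-interval of length $2\alpha h$. A general interval $I_h$ with $|I_h|\lesssim h$, centered at some $t_0$, is handled by a shift: for the homogeneous estimate I would replace $f$ by $e^{-it_0P}f$ and use unitarity of $e^{-it_0P}$ to restore $\norm{f}_{L^2}$; for the retarded one, translating both $t$ and $s$ by $t_0$ preserves both the convolution structure and the two norms. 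Covering $I_h$ by a uniformly bounded number of such sub-intervals then finishes the argument.

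The only delicate point is the bookkeeping --- verifying that the powers of $h$ produced by the time rescaling cancel consistently between the two sides of the Keel-Tao bilinear inequality, and that the short-time range $\abs{t-s}\le\alpha h^\gamma$ of the dispersion estimate matches exactly the time window on which Strichartz is sought. All the substantive analysis has already been packaged into Proposition \ref{proposition_dispersive} and the displayed dispersion estimate preceding the corollary, so beyond that packaging no further microlocal input is required.
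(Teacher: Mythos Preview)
Your proposal is correct and follows the same approach as the paper: rescale time by $h^{1-\gamma}$, verify the Keel--Tao hypotheses for the rescaled propagator $A(t)=\varphi(-h^2\Delta_{g_0})e^{-ih^{1-\gamma}tP}$, and undo the rescaling. The paper in fact gives only the one-line indication ``the $TT^*$-argument due to \cite{Keel_Tao} then provides the following'', so your write-up is more detailed; the only cosmetic wrinkle is the factor of $2$ between $|I'-I'|$ and the range of the dispersion estimate, which is harmless since $\alpha$ in Proposition~\ref{proposition_dispersive} may be taken half as small.
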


\begin{proof}[Proof of Theorem \ref{main_theorem_1}]
If $P$ commutes with $\Delta_{g_0}$ then \eqref{main_theorem_1_1} easily follows from Corollary \ref{corollary_proof_main_theorem_2} and Proposition \ref{proposition_LP}. We hence consider the case when $P$ does not commutes with $\Delta_{g_0}$ only. 

The proof is a slight modification of \cite[Proposition 5.4]{Bouclet_Tzvetkov_1}. Let $3\le m\le n$. By virtue of the interpolation theorem, it suffices to show the endpoint estimate:
$$
\norm{e^{-itP}u_0}_{L^2([0,1];L^{2_m^*}(M))}
\le C\norm{u_0}_{H^{\frac{\gamma}{2}}(M)},\quad
2_m^*:=\frac{2m}{m-2}.
$$ 
Let $\varphi,\psi\in C_0^\infty(\R)$ be such that $\supp \varphi,\supp\psi \Subset(0,\infty)$ and $\psi\equiv1$ on $\supp \varphi$. We write $\varphi_h:=\varphi(-h^2\Delta_{g_0})$ and $\psi_h:=\psi(-h^2\Delta_{g_0})$ for simplicity. 
Since $u=\varphi_he^{-it P}u_0$ solves
$$
(i\partial_t-P)u=[\varphi_h,P]e^{-itP}u_0;\quad u|_{t=0}=\varphi_hu_0,
$$
we obtain the Duhamel formula
$$
\varphi_he^{-itP}
=e^{-itP}\varphi_h
-i\int_0^t e^{-i(t-s)P}[\varphi_h,P]e^{-is P}ds.
$$
By virtue of Proposition \ref{proposition_functional_calculus}, the symbol of $[\varphi_h,P]$ is supported in $\supp \varphi(h^2|\xi|_{g_0}^2)$ modulo $O(h^\infty)$ and $\psi(h^2|\xi|_{g_0}^2)\equiv1$ in $\supp \varphi(h^2|\xi|_{g_0}^2)$. We thus learn by the symbolic calculus that

$$
[\varphi_h,P]=\psi_h[\varphi_h,P]\psi_h+O_{L^2\to L^{2_m^*}}(h),
$$
which implies
\begin{align}
\nonumber
\varphi_he^{-itP}
&=\psi_h\varphi_he^{-itP}\\
\label{proof_theorem_1}
&=\psi_he^{-itP}\varphi_h
-i\int_0^t \psi_he^{-i(t-s)P}\psi_h[\varphi_h, P]\psi_he^{-is P}ds
+O_{L^2\to L^{2_m^*}}(h).
\end{align}

Next we split the time interval $[0,1]=J_0\cup J_1\cup\cdots\cup J_N$ with $J_j=[jh,(j+1)h]$, $j=0,1,...,N-1$, and $J_N=[1-h,1]$. For $j=0$, applying Corollary \ref{corollary_proof_main_theorem_2} to \eqref{proof_theorem_1} yields
\begin{align*}
&\norm{\varphi_h e^{-itP}u_0}_{L^2(J_0;L^{2_m^*}(M))}^2\\
&\le Ch^{1-\gamma}\norm{\varphi_hu_0}_{L^2(M)}^2
+Ch\norm{u_0}_{L^2(M)}^2
+Ch^{1-\gamma}\norm{[\varphi_h,P]\psi_he^{-itP}u_0}_{L^1(J_0;L^{2}(M))}^2\\
&\le Ch^{1-\gamma}\norm{\varphi_hu_0}_{L^2(M)}^2
+Ch\norm{u_0}_{L^2(M)}^2
+Ch^{-\gamma}\norm{\psi_he^{-itP}u_0}_{L^2(J_0;L^{2}(M))}^2,
\end{align*}
where, in the last line, we have used the bound $[\varphi_h, P]=O_{L^2\to L^2}(h^{-1})$, H\"older's inequality with respect to $t$ and the bound $|J_0|\le h$. 
We similarly obtain the same bound for $j=N$:
\begin{align*}
&\norm{e^{-itP}u_0}_{L^2(J_N;L^{2_m^*}(M))}^2\\
&\le Ch^{1-\gamma}\norm{\varphi_hu_0}_{L^2(M)}^2
+Ch\norm{u_0}_{L^2(M)}^2
+Ch^{-\gamma}\norm{\psi_he^{-itP}u_0}_{L^2(J_N;L^{2}(M))}^2.
\end{align*}
For $j=1,2,...,N-1$, taking $\theta \in C_0^\infty(\R)$ so that $\theta\equiv1$ on $[-1/2,1/2]$ and $\supp \theta \subset[-1,1]$, we set $\theta_j(t)=\theta(t/h-j-1/2))$. 
It is easy to see that $\theta_j\equiv1$ on $J_j$ and $\supp \theta_j\subset \wtilde{J}_j=J_j+[-h/2,h/2]$. 
Then we consider $v_j=\theta_j(t)\varphi_he^{-itP}u_0$, which solves
$$
(i\partial_t -P)v_j=\theta_j' u+\theta_j[\varphi_h,P]e^{-itP}u_0;\quad v_j|_{t=0}=0,
$$
and hence obeys the Duhamel formula
$$
\theta_j(t)\varphi_he^{-itP}=-i\int_0^t e^{-i(t-s)P}\left(\theta_j'(s)\varphi_h+\theta_j(s)[\varphi_h,P]e^{-is P}\right)ds.
$$
A same argument as in the case $j=0$ and Corollary \ref{corollary_proof_main_theorem_2} then imply 
\begin{align*}
&\norm{\varphi_he^{-itP}u_0}_{L^2(J_j;L^{2_m^*}(M))}^2\\
&\le \norm{\theta_j(t)\varphi_he^{-itP}u_0}_{L^2(\wtilde J_j;L^{2_m^*}(M))}^2\\
&\le Ch\norm{u_0}_{L^2(M)}^2
+ Ch^{-1-\gamma}\norm{\varphi_he^{-itP}u_0}_{L^1(\wtilde{J}_j;L^2(M))}^2
+ Ch^{1-\gamma}\norm{[\varphi_h,P]\psi_he^{-itP}u_0}_{L^1(\wtilde{J}_j;L^2(M))}^2\\
&\le Ch\norm{u_0}_{L^2(M)}^2
+Ch^{-\gamma}\norm{\psi_he^{-itP}u_0}_{L^2(\wtilde{J}_j;L^{2}(M))}^2.
\end{align*}
Summing over $j=0,1,...,N$ implies
$$
\norm{\varphi_he^{-itP}u_0}_{L^2([0,1];L^{2_m^*}(M))}^2
\le Ch^{1-\gamma}\norm{\varphi_hu_0}_{L^2(M)}^2
+Ch^{-\gamma}\norm{\psi_he^{-itP}u_0}_{L^2([0,1];L^2(M))}^2.
$$
Summing over $h=2^{-j}$ with $j\in \N$, using Proposition \ref{proposition_LP} and \eqref{almost_orthogonality} we obtain
$$
\norm{e^{-itP}u_0}_{L^2([0,1];L^{2_m^*}(M))}
\le C\norm{u_0}_{H^{\frac{-1+\gamma}2}(M)}+C\norm{e^{-itP}u_0}_{L^2([0,1];H^{\frac{\gamma}{2}}(M))}.
$$ 
Applying Lemma \ref{proposition_energy_estimates} to the second term of the right hand side implies 
\begin{align*}
\norm{e^{-itP}u_0}_{L^2([0,1];H^{\frac{\gamma}{2}}(M))}
\le C\norm{u_0}_{H^{\frac{\gamma}{2}}(M)},
\end{align*}
which completes the proof.
\end{proof}

\section{Optimality of Theorem \ref{theorem_1}}
\label{section_optimality}
In this section we prove the following, which shows that the loss of derivatives $\frac1p$ in Theorem \ref{theorem_1} is sharp for the case on $\mathbb{S}^3\times\mathbb{S}^3$. 

\begin{proposition}
\label{proposition_optimality_1}
Suppose that $\mathbb{S}^3$ is endowed with the standard metric. Let $(p,q)$ satisfy the admissible condition \eqref{admissible_1} with $n=6$. Then the estimate
\begin{align}
\label{proposition_optimality_1_1}
\norm{e^{it(\Delta_x-\Delta_y)}u_0}_{L^p([0,1];L^q(\mathbb{S}^3_x\times \mathbb{S}^3_y))}
\le C\norm{u_0}_{H^s(\mathbb{S}^3_x\times \mathbb{S}^3_y)}
\end{align}
fails for $s<\frac1p$, where $\Delta_x=\Delta_{\mathbb{S}^3_x}$ and $\Delta_y=\Delta_{\mathbb{S}^3_y}$. 
\end{proposition}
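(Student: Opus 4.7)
I would mimic the construction of Godet and Tzvetkov on $\mathbb{T}^2$, replacing their resonant line $m=n$ by the ``diagonal eigenvalue'' condition $\lambda_k^x = \lambda_k^y$ on $\mathbb{S}^3\times\mathbb{S}^3$. The key structural observation is that any tensor product $f(x)g(y)$ of eigenfunctions of $-\Delta_{\mathbb{S}^3}$ with \emph{equal} eigenvalues is fixed by the propagator $e^{it(\Delta_x - \Delta_y)}$, so a sum of such tensor products reduces the Strichartz norm to a static $L^q$ norm, turning the problem into a comparison of concentrated $L^q$ and $H^s$ norms.

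For $N\ge 1$ set
$$
u_0^{(N)}(x,y) \;=\; \sum_{k=1}^N \tilde h_k(x)\,\tilde h_k(y),
$$
where $\tilde h_k := h_k/\|h_k\|_{L^2}$ is the $L^2$-normalized highest-weight spherical harmonic $h_k(x) = (x_1+ix_2)^k$ of degree $k$ on $\mathbb{S}^3$. In Hopf coordinates $(\theta,\psi,\phi)\in[0,2\pi)^2\times[0,\pi/2]$ with $x_1+ix_2 = e^{i\theta}\cos\phi$ and $d\mu = \cos\phi\sin\phi\,d\theta\,d\psi\,d\phi$, a direct computation gives $\|h_k\|_{L^2}^2 = 2\pi^2/(k+1)$, and each $\tilde h_k$ is an eigenfunction of $-\Delta_{\mathbb{S}^3}$ with eigenvalue $\lambda_k = k(k+2)$. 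Since $(\Delta_x-\Delta_y)(\tilde h_k\otimes \tilde h_k) = 0$, the propagator fixes $u_0^{(N)}$, whence
$$
\|e^{it(\Delta_x-\Delta_y)}u_0^{(N)}\|_{L^p([0,1];L^q)} \;=\; \|u_0^{(N)}\|_{L^q(\mathbb{S}^3\times\mathbb{S}^3)}.
$$
By $L^2$-orthogonality of the summands and $(1-\Delta_x-\Delta_y)(\tilde h_k\otimes\tilde h_k) = (1+2\lambda_k)\tilde h_k\otimes\tilde h_k$, the Sobolev norm is
$$
\|u_0^{(N)}\|_{H^s(\mathbb{S}^3\times\mathbb{S}^3)}^2 \;=\; \sum_{k=1}^N(1+2\lambda_k)^s \;\sim\; N^{2s+1}.
$$

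The crux is a matching lower bound on $\|u_0^{(N)}\|_{L^q}$. Writing $\xi := \theta_x+\theta_y$ and $A := \cos\phi_x\cos\phi_y\in[0,1]$, the sum takes the form of a truncated derivative-of-geometric series
$$
u_0^{(N)}(x,y) \;=\; \frac{1}{2\pi^2}\sum_{k=1}^N (k+1)\,(A e^{i\xi})^k.
$$
For $|\xi|\le \delta/N$ and $1-A\le \delta/N$ with $\delta>0$ small enough, one has $\cos(k\xi)\ge 1/2$ and $A^k \ge 1/2$ for every $k\le N$; the real part of each summand is then $\gtrsim k$ and $|u_0^{(N)}(x,y)|\gtrsim N^2$. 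The constraint $1-A\lesssim 1/N$ translates into $\phi_x,\phi_y\lesssim 1/\sqrt N$; integrating the weight $\phi_x\phi_y$ over the quarter-disk $\phi_x^2+\phi_y^2\lesssim 1/N$ yields $\sim N^{-2}$, and combining with the $\xi$-window of size $N^{-1}$ and the free variables $\theta_x,\psi_x,\psi_y$ gives a total volume $\sim N^{-3}$ for the concentration region. Therefore
$$
\|u_0^{(N)}\|_{L^q}^q \;\gtrsim\; N^{2q}\cdot N^{-3} \;=\; N^{2q-3}, \qquad \|u_0^{(N)}\|_{L^q} \;\gtrsim\; N^{2-3/q}.
$$

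Feeding these into \eqref{proposition_optimality_1_1} forces $N^{2-3/q}\lesssim N^{s+1/2}$ for all $N$, i.e., $s\ge \tfrac{3}{2}-\tfrac{3}{q}$. The admissibility $\tfrac{2}{p}+\tfrac{6}{q} = 3$ gives $\tfrac{3}{q} = \tfrac{3}{2}-\tfrac{1}{p}$, so $\tfrac{3}{2}-\tfrac{3}{q} = \tfrac{1}{p}$; thus \eqref{proposition_optimality_1_1} cannot hold for any $s<1/p$. The one delicate point is the pointwise lower bound $|u_0^{(N)}|\gtrsim N^2$ throughout the claimed $O(N^{-3})$-neighborhood, which amounts to ruling out cancellation in $\sum_{k\le N}(k+1)(Ae^{i\xi})^k$ when $\xi$ and $1-A$ are both of size $\lesssim 1/N$; fixing $\delta$ small makes this a direct Abel-summation estimate, and this is precisely the step that produces the scale $N^{-3}$ and reproduces the Godet--Tzvetkov threshold $1/p$ in the present product-sphere setting.
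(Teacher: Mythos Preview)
Your proof is correct and takes a genuinely different route from the paper's. The paper exploits the Lie group structure $\mathbb{S}^3\cong\SU(2)$: for any $f\in C^\infty(\mathbb{S}^3)$ the function $u_0(x,y)=f(x\cdot y)$ is annihilated by $\Delta_x-\Delta_y$ because left and right translations are isometries (Lemma~\ref{lemma_optimality_2}); after a change of variables this reduces the putative Strichartz inequality to the single-sphere bound $\norm{f}_{L^q(\mathbb{S}^3)}\lesssim\norm{f}_{L^2}^{1-s/2}\norm{f}_{H^2}^{s/2}$, which is then falsified by a bump concentrated at scale $\lambda^{-1}$ near the identity. You instead build a stationary function directly as a sum of diagonal tensor products of highest-weight harmonics and carry out the $L^q$ concentration estimate on the six-dimensional product, closely mirroring the Godet--Tzvetkov template on $\mathbb{T}^2$. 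The paper's argument is more conceptual and, as its remark notes, extends verbatim to $G\times G$ for any compact Lie group $G$ with bi-invariant metric; your argument is more hands-on and avoids group structure entirely, so it could in principle be adapted to products $M\times M$ whenever one has a supply of explicit concentrating eigenfunctions (Gaussian beams, zonal harmonics, etc.), which speaks to the open question the paper raises about proving optimality without group structure.
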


The proof  basically follows the same line to \cite[Lemma 2.1]{Godet_Tzvetkov}. In order to prove this proposition, isometries on $\mathbb{S}^3$ play a crucial role. We first recall the invariance of the Laplacian with respect to isometries:
\begin{lemma}
\label{lemma_optimality_2}
Let $(M,g)$ be a $d$-dimensional smooth compact Riemannian manifold without boundaries and $\Delta_g$ the associated Laplace-Beltrami operator. Let $\varphi$ be an isometry of $M$. Then, for any smooth function $u$ on $M$ and every $x\in M$, $\Delta_g(u(\varphi(x)))=\varphi(\Delta_g u(x))$. 
\end{lemma}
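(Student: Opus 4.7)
The plan is to reduce the identity to the basic fact that the Laplace--Beltrami operator is built entirely out of the Riemannian metric $g$, and that an isometry by definition preserves $g$. Thus any metric construction must commute with pullback by $\varphi$. I would first recall the coordinate-free definition $\Delta_g u = \mathrm{div}_g \nabla_g u$, where both the gradient $\nabla_g$ (raising indices via $g^{ij}$) and the divergence $\mathrm{div}_g$ (built from the Levi-Civita connection, or equivalently from the volume form $d\mathrm{vol}_g$) are defined purely in terms of $g$.

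Next I would invoke the defining property of an isometry: $\varphi^*g = g$, i.e. $d\varphi_x : T_xM \to T_{\varphi(x)}M$ is a linear isometry for every $x$. From this one reads off that $\varphi^*$ commutes with the gradient and with the divergence separately: indeed, if $f$ is smooth then $\nabla_g(f\circ\varphi) = d\varphi^{-1}\bigl((\nabla_g f)\circ\varphi\bigr)$, and similarly $\mathrm{div}_g$ of a pushforward vector field equals the pullback of $\mathrm{div}_g$ of the field, since the volume form pulls back to itself. Composing the two identities one obtains
\[
\Delta_g(u\circ\varphi)(x) = (\Delta_g u)(\varphi(x)).
\]

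As an alternative I would give a purely local-coordinate proof, which avoids the invariant language. Choose a chart $(V,y)$ near $\varphi(x)$; since $\varphi$ is a diffeomorphism, $(U,x):=(\varphi^{-1}(V),y\circ\varphi)$ is a chart near $x$, and the isometry condition $\varphi^*g=g$ says exactly that the matrix $(g_{ij}(z))$ in the $y$-coordinates equals the matrix $(g_{ij}(\varphi^{-1}(z)))$ computed in the $x$-coordinates. Plugging this into the standard formula
\[
\Delta_g v = \frac{1}{\sqrt{|g|}}\,\partial_i\bigl(\sqrt{|g|}\,g^{ij}\partial_j v\bigr),
\]
and using the chain rule for $u\circ\varphi$, one sees that the expression for $\Delta_g(u\circ\varphi)$ in $x$-coordinates at $x$ coincides with the expression for $\Delta_g u$ in $y$-coordinates at $\varphi(x)$.

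There is really no obstacle here: the only risk is notational, namely being careful that the identity to be proved is the commutation $(\Delta_g u)\circ\varphi = \Delta_g(u\circ\varphi)$, and that ``isometry'' is being used in the Riemannian sense $\varphi^*g=g$ (as opposed to merely a distance-preserving map, which is equivalent by Myers--Steenrod but would require extra work). Once these are pinned down, the lemma is an immediate consequence of the naturality of metric constructions under pullback.
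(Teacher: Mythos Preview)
Your proposal is correct. Both your arguments---the coordinate-free naturality of $\nabla_g$ and $\mathrm{div}_g$ under pullback by an isometry, and the direct local computation in charts adapted to $\varphi$---yield the pointwise identity $\Delta_g(u\circ\varphi)=(\Delta_g u)\circ\varphi$ immediately.

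The paper takes a genuinely different route: rather than a pointwise argument, it proves the identity in weak form by testing against $v\circ\varphi$ for arbitrary smooth $v$, i.e.\ it shows
\[
\int_M \Delta_g(u\circ\varphi)\,(v\circ\varphi)\,dg \;=\; \int_M (\Delta_g u)\circ\varphi\,(v\circ\varphi)\,dg.
\]
Both sides are reduced, via integration by parts in local coordinates and the change of variables $x\mapsto\varphi(x)$, to the Dirichlet form $-\int_M g^{ij}\partial_i u\,\partial_j v\,|g|^{1/2}dx$; the key step is recognizing that the tensor $\tilde g^{kl}(x)=g^{ij}(x)\partial_j\varphi_k(x)\partial_i\varphi_l(x)$ is exactly $g^{kl}(\varphi(x))$ by the isometry condition. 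Your approach is shorter and more conceptual; the paper's approach stays entirely at the level of elementary integral calculus and avoids invoking that $\varphi^*$ intertwines gradient and divergence as separate operations, at the cost of a slightly longer computation.
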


\begin{proof}
It suffices to show that, for any smooth functions $u$ and $v$ on $M$,
\begin{align}
\label{ee}
\int_M \Delta_g(u(\varphi(x)))v(\varphi(x))dg(x)=\int_M (\Delta_g u)(\varphi(x))v(\varphi(x))dg(x),
\end{align}
where $dg(x)={\det(g_{ij}(x))}^{\frac{1}{2}} dx={|g(x)|}^{\frac{1}{2}} dx$. Since $\varphi$ is an isometry and hence preserves the volume element, the right hand side is equal to
$\int_M \Delta u(x)v(x)dg(x)$. In local coordinates, this may be written in the form
\begin{eqnarray*}
\int_M {\frac{1}{{|g(x)|^{\frac{1}{2}}}}}\partial_i \left(g^{ij}(x) \partial_ju(x) {|g(x)|}^{\frac{1}{2}}\right) v(x){|g(x)|}^{\frac{1}{2}} dx
= -\int_M  g^{ij}(x) (\partial_ju)(x) {|g(x)|}^{\frac{1}{2}}(\partial_i v)(x)dx.
\end{eqnarray*}
On the other hand, the left hand side of (\ref{ee}) can be brought to the form
\begin{align*}
&\int_M  \partial_i \left(g^{ij}(x) \partial_ju(\varphi(x)) {|g(x)|}^{\frac{1}{2}}\right) v(\varphi(x)) dx
= -\int_M g^{ij}(x) \partial_j u(\varphi(x)) {|g(x)|}^{\frac{1}{2}} \partial_i v(\varphi(x)) dx \\
&= -\int_M g^{ij}(x) \Big((\partial_k u)(\varphi(x)) \partial_j \varphi_k (x)\Big)
\Big( (\partial_l v)(\varphi(x)) \partial_i \varphi_l (x)\Big) {|g(x)|}^{\frac{1}{2}}dx,
\end{align*}
where $\varphi_1,\ldots, \varphi_d$ are the coordinates in $\mathbb{R}^d$ of the function $\varphi$. 
Now let $$\tilde{g}^{kl}(x)= g^{ij}(x) \partial_j \varphi_k (x)\partial_i \varphi_l (x).$$ Then $\tilde{g}$ is the metric induced by the map $\varphi$, and since $\varphi$ is an isometry, $\tilde{g}(x)=g(\varphi(x))$. Hence the left hand side of (\ref{ee}) is equal to 
\begin{eqnarray*}
\int_M  \tilde{g}^{kl}(x) (\partial_k u)(\varphi(x)) (\partial_l v)(\varphi(x)){|g(x)|}^{\frac{1}{2}}dx
&=& -\int_M  {g}^{kl}(\varphi(x)) (\partial_k u)(\varphi(x)) (\partial_l v)(\varphi(x)){|g(x)|}^{\frac{1}{2}}dx\\
&=& -\int_M  {g}^{kl}(x) (\partial_k u)(x) (\partial_l v)(x){|g(x)|}^{\frac{1}{2}}dx,
\end{eqnarray*}
again because $\varphi$ is an isometry. This proves (\ref{ee}).
\end{proof}

\begin{proof}[Proof of Proposition \ref{proposition_optimality_1}]
Let us recall the group structure of $\mathbb{S}^3$. We can view $\mathbb{S}^3$ as the unit sphere in the quaternion field and this endows $\mathbb{S}^3$ with a group structure with the identity element $(1,0,0,0)$. More precisely, using a bijective homomorphism 
\begin{align*}
&\mathbb{S}^3\ni x=(x_1,x_2,x_3,x_4)\\
&\mapsto \left(\begin{matrix}x_1+ix_2  & x_3+ix_4\\-x_3+ix_4 & x_1-ix_2\end{matrix}\right)\in \SU(2)=\left\{\left(\begin{matrix}\alpha  & \beta\\-\overline\beta & \overline\alpha\end{matrix}\right)\in \C^{2\times 2};\ |\alpha|^2+|\beta|^2=1\right\},
\end{align*}
we have $\mathbb{S}^3\cong \SU(2)$ and the induced group law on $\mathbb{S}^3$ is given by
\begin{align*}
(x_1,x_2,x_3,x_4)\cdot(y_1,y_2,y_3,y_4)
&=(
x_1y_1-x_2y_2-x_3y_4-x_4y_4,
x_1y_2+x_2y_1+x_3y_4-x_3y_4,\\
&x_1y_3+x_3y_1+x_3y_4-x_4y_3,
x_1y_4+x_4y_1+x_2y_3-x_3y_2),\\
(x_1,x_2,x_3,x_4)^{-1}&=(x_1,-x_2-x_3-x_4).
\end{align*}
Note that the right and left multiplication maps $R_x,L_x:\mathbb{S}^3\to\mathbb{S}^3$, defined by $R_xy=y\cdot x$ and $L_xy=x\cdot y$, respectively,  are orthogonal maps and hence isometries with respect to the standard metric on $\mathbb{S}^3$. Furthermore, if $f\in C^\infty(\mathbb{S}^3)$ then $f(x\cdot y)\in C^\infty(\mathbb{S}_x^3\times\mathbb{S}_y^3)$ and $f(x\cdot y)$ satisfies the following stationary problem:
\begin{align*}
(\Delta_x-\Delta_y)f(x\cdot y)=\Delta_x(f(L_yx))-\Delta_y(f(R_xy))=(\Delta f)(L_yx)-(\Delta f)(R_xy)=0,
\end{align*}
where we have used Lemma \ref{lemma_optimality_2}. Hence if \eqref{proposition_optimality_1_1} holds then
\begin{align}
\label{optimality_1}
\norm{f(x\cdot y)}_{L^q(\mathbb{S}^3_x\times\mathbb{S}^3_y)}\le C\norm{f(x\cdot y)}_{H^s(\mathbb{S}_x^3\times\mathbb{S}_y^3)}.
\end{align}
Since isometries preserve the volume element, using the change of variable $x\mapsto x\cdot y^{-1}$, we have $\norm{f(x\cdot y)}_{L^q(\mathbb{S}^3_x\times\mathbb{S}^3_y)}^q=\Vol(\mathbb{S}^3)\norm{f}_{L^q(\mathbb{S}^3)}^q$. By an interpolation and Lemma \ref{lemma_optimality_2} we also obtain 
$$
\norm{f(x\cdot y)}_{H^s(\mathbb{S}_x^3\times\mathbb{S}_y^3)}
\le C \norm{f(x\cdot y)}_{L^2(\mathbb{S}^3_x\times\mathbb{S}^3_y)}^{1-\frac s2}\norm{f(x\cdot y)}_{H^2(\mathbb{S}^3_x\times\mathbb{S}^3_y)}^{\frac s2}
\le C \norm{f}_{L^2(\mathbb{S}^3)}^{1-\frac s2}\norm{f}_{H^2(\mathbb{S}^3)}^{\frac s2}. 
$$
Therefore \eqref{optimality_1} implies
\begin{align}
\label{optimality_2}
\norm{f}_{L^q(\mathbb{S}^3)}\le C \norm{f}_{L^2(\mathbb{S}^3)}^{1-\frac s2}\norm{f}_{H^2(\mathbb{S}^3)}^{\frac s2}.
\end{align}
Next we choose $\psi \in C_0^\infty(\R^3)$ supported in a sufficiently small ball $U$ centered at the origin and, with a large parameter $\lambda\ge1$, define $\varphi_\lambda\in C^\infty(\mathbb{S}^3)$ by
$$
\varphi_\lambda(x_1,x_2,x_3,x_4)=
\begin{cases}
\psi(\lambda x_2,\lambda x_3,\lambda x_4),&(x_2,x_3,x_4)\in U_\lambda,\\
0,&\text{otherwise},
\end{cases}
$$
where $U_\lambda=\{z\in \R^3;\ \lambda z\in U\}\subset U$. Note that $\varphi_\lambda$ is supported near $(1,0,0,0)$. Then the $L^q$ norm of $\varphi_\lambda$ reads
$$
\Big(\int_{\mathbb{S}^3}|\varphi_\lambda(x)|^qd\mu(x)\Big)^{\frac 1q}\approx \Big(\int_{\R^3}|\psi(\lambda x_2,\lambda x_3,\lambda x_4)|^qdx_2dx_3dx_4\Big)^{\frac1q}\approx \lambda^{-\frac 3q},
$$
while the sobolev norm $\norm{\varphi_\lambda}_{H^2(\mathbb{S}^3)}$ behaves like
\begin{align*}
\Big(\int_{\mathbb{S}^3}|(1-\Delta_{\mathbb{S}^3})\varphi_\lambda(x)|^2d\mu(x)\Big)^{\frac 12}\approx\Big(\int_{\R^3}|(1-\Delta_{\R^3})\psi(\lambda x_2,\lambda x_3,\lambda x_4)|^2dx_2dx_3dx_4\Big)^{\frac12}\approx \lambda^{2-\frac32},
\end{align*}
where $d\mu(x)$ denotes the Haar measure on $\mathbb{S}^3$. Now we apply \eqref{optimality_2} to $f=\varphi_\lambda$ and obtain 
$$
\lambda^{-\frac3q}\lesssim \lambda^{-\frac32\left(1-\frac s2\right)+s-\frac{3s}{4}}=\lambda^{s-\frac32}.
$$
For sufficiently large $\lambda$, this implies $s\ge 3\left(\frac12-\frac1q\right)=\frac1p$ which completes the proof. 
\end{proof}

\begin{remark}
(1) Let $G$ be a $d$-dimensional compact Lie group endowed with a bi-invariant Riemannian metric $g$. Then the above argument still works if we replace $\mathbb{S}^3\times \mathbb{S}^3$ with $G\times G$ and hence Proposition \ref{proposition_optimality_1} can be extended to the solution to
$$
(i\partial_t+\Delta_{g_x}-\Delta_{g_y})u(t,x,y)=0;\quad u|_{t=0}=u_0\in L^2(G_x\times G_y).
$$

(2) It is an interesting open problem to provide a proof of the optimality of Theorem \ref{theorem_1} without using group structures. 
\end{remark}


\end{document}